\newtheorem{theorem}{Theorem}
\newtheorem{lemma}[theorem]{Lemma}
\begin{document}

\title{Upper bound on lattice stick number of knots}
\author[K. Hong]{Kyungpyo Hong}
\address{Department of Mathematics, Korea University, 1, Anam-dong, Sungbuk-ku, Seoul 136-701, Korea}
\email{cguyhbjm@korea.ac.kr}
\author[S. No]{Sungjong No}
\address{Department of Mathematics, Korea University, 1, Anam-dong, Sungbuk-ku, Seoul 136-701, Korea}
\email{blueface@korea.ac.kr}
\author[S. Oh]{Seungsang Oh}
\address{Department of Mathematics, Korea University, Anam-dong, Sungbuk-ku, Seoul 136-701, Korea}
\email{seungsang@korea.ac.kr}

\thanks{2010 Mathematics Subject Classification: 57M25, 57M27}
\thanks{This research was supported by Basic Science Research Program through
the National Research Foundation of Korea(NRF) funded by the Ministry of Education,
Science and Technology (No.~2009-0074101).}

\begin{abstract}
The lattice stick number $s_L(K)$ of a knot $K$ is
defined to be the minimal number of straight line segments
required to construct a stick presentation of $K$ in the cubic lattice.
In this paper, we find an upper bound on the lattice stick number of a nontrivial knot $K$,
except trefoil knot, in terms of the minimal crossing number $c(K)$ which is $s_L(K) \leq 3 c(K) +2$.
Moreover if $K$ is a non-alternating prime knot, then $s_L(K) \leq 3 c(K) - 4$.
\end{abstract}

\maketitle

\section{Introduction} \label{sec:intro}
A simple closed curve embedded into the Euclidean 3-space $\mathbb{R}^3$ is called a {\em knot\/}.
Two knots $K$ and $K'$ are said to be
{\em equivalent\/}, if there exists an orientation preserving
ambient isotopy of $\mathbb{R}^3$ which maps $K$ to $K'$,
or to say roughly, we can obtain $K'$ from $K$ by a sequence
of moves without intersecting any strand of the knot.
A knot equivalent to another knot in a plane of the 3-space is said to be {\em trivial\/}.

A {\em stick knot\/} is a knot which consists of finite number of straight line segments, called {\em sticks\/}.
One natural question concerning stick knots may be to determine the number of sticks.
The {\em stick number\/} $s(K)$ of a knot $K$ is defined to be the minimal number of sticks
required to construct this stick knot.
Several upper and lower bounds on the stick number for various classes of knots and links were founded.
Especially Negami found upper and lower bounds on the stick number of any nontrivial knot $K$
in terms of the minimal crossing number $c(K)$ \cite{N};
\[ \frac{5+ \sqrt{25+8(c(K)-2)}}{2} \leq s(K) \leq 2c(K). \]
Later Huh and Oh \cite{HO3} improved Negami's upper bound to $s(K) \leq \frac{3}{2} (c(K)+1)$.
Furthermore $s(K) \leq \frac{3}{2} c(K)$ for a non-alternating prime knot $K$.

In this paper we deal with another quantity concerning stick knots.
A {\em lattice stick knot\/} is a stick knot in the cubic lattice
$\mathbb{Z}^3=(\mathbb{R} \times \mathbb{Z} \times \mathbb{Z}) \cup (\mathbb{Z} \times \mathbb{R}
\times \mathbb{Z}) \cup (\mathbb{Z} \times \mathbb{Z} \times \mathbb{R})$.
The {\em lattice stick number\/} $s_{L}(K)$ of a knot $K$ is defined to be the minimal number of sticks
required to construct this lattice stick knot.
From the definition we easily know that the lattice stick number of trivial knot is $4$.
Janse van Rensburg and Promislow \cite{JaP} proved that $s_L(K) \ge 12$ for any nontrivial knot $K$.
Also Huh and Oh \cite{HO1,HO2} proved that  $s_L(3_1) = 12$, $s_L(4_1) = 14$,
and $s_L(K) \geq 15$ for any other non-trivial knot $K$
via a simple and elementary argument, called {\em proper leveledness\/}.
Lattice stick knots of the knots $3_1$ and $4_1$ are depicted in Figure \ref{fig:1}.

In this paper we establish an upper bound on the lattice stick number of a nontrivial knot $K$
in terms of the minimal crossing number $c(K)$.

\begin{theorem} \label{thm:main}
Let $K$ be a nontrivial knot which is not trefoil knot. Then $s_L(K) \leq 3 c(K) + 2$.
Moreover if $K$ is a non-alternating prime knot, then $s_L(K) \leq 3 c(K) - 4$.
\end{theorem}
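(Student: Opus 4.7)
My plan is to construct an explicit lattice stick realization of $K$ from a small arc presentation, using three sticks per arc and then saving four sticks by merging collinear end-sticks at selected axis meeting points.

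Start with an arc presentation of $K$ with $\alpha = \alpha(K)$ arcs. By the theorem of Bae and Park, $\alpha(K) \leq c(K)+2$ with equality only for non-split alternating prime links, and for non-alternating prime knots a sharper bound of the form $\alpha(K) \leq c(K)$ is available. Place the binding axis along the column $\{(0,0)\}\times\mathbb{R}$ of the cubic lattice, and assign each of the $\alpha$ arcs to one of the four horizontal rays $+x$, $-x$, $+y$, $-y$, respecting the cyclic page order of the presentation by grouping arcs into consecutive blocks on each ray and nesting them according to the axis heights of their endpoints. Realize each arc as three axis-parallel sticks: an outgoing horizontal stick, a vertical $z$-stick along the ray, and a returning horizontal stick. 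The construction uses $3\alpha$ sticks and yields a simple lattice polygon of the correct knot type.

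Next, at an axis meeting point shared by two arcs, the two adjacent end-sticks are collinear (and can therefore be merged into a single stick, saving one) exactly when the two arcs are on \emph{opposite} rays, i.e., both on $\pm x$ or both on $\pm y$. A small combinatorial argument on the cycle of $\alpha$ arcs---first two-colouring by axis type, then assigning signs---shows that whenever $\alpha \geq 6$ the ray assignment can be chosen so that at least four such mergers occur, giving $s_L(K) \leq 3\alpha(K)-4$. Combined with the arc-index bounds this yields $s_L(K) \leq 3c(K)+2$ in general, and $s_L(K) \leq 3c(K)-4$ for non-alternating prime knots. The trefoil, which has $\alpha(3_1)=5$, is a genuine exception because a length-five arc cycle admits only three mergers; this is consistent with the known value $s_L(3_1)=12$ and explains its exclusion from the first inequality.

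The main obstacle is the simultaneous verification of (a) that the nested four-ray layout always realizes the given arc presentation as a simple lattice polygon---requiring compatibility of the nesting with both the cyclic page order and the axis height ordering---and (b) that the four-merger combinatorial optimization can always be carried out subject to those constraints for $\alpha \geq 6$. Once (a) and (b) are established, the stick count is immediate.
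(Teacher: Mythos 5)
There is a genuine gap here, in fact two, and they sit exactly at the points where the proof is hard.

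First, your four-ray construction is not always realizable as an embedded lattice polygon. If two arcs have endpoint height intervals $[i,j]$ and $[i',j']$ on the binding axis with $i<i'<j<j'$ (interleaved chords), they cannot be placed on the same ray: the three-stick path of the second arc starts on the axis inside the region bounded by the first arc's path together with the axis segment from $i$ to $j$, and ends outside it, so it must cross the first arc no matter how the two horizontal distances are chosen. Hence each ray can carry only a laminar (pairwise non-interleaved) family of arcs, and your layout requires a proper $4$-colouring of the interleavement circle graph of the chord diagram. Such a colouring need not exist: in a star-shaped arc presentation (every arc joins binding indices differing by $n$ or $n+1$, with $a(K)=2n+1$) every pair of chords interleaves, so the circle graph is complete on $a(K)$ vertices, and the standard minimal arc presentations of the $(n+1,n)$-torus knots are exactly of this form. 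The paper sidesteps this entirely by using the third dimension instead of four rays: each arc is laid in the plane $z=k$ where $k$ is its page number, so distinct arcs can never meet except along the $z$-sticks over the binding points, and no colouring hypothesis is needed.

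Second, the claim that whenever $\alpha\geq 6$ the ray assignment can be chosen so that at least four mergers occur is the entire content of the bound $s_L(K)\leq 3a(K)-4$, and you give no argument for it; moreover the exceptional set is misidentified. The paper's analysis shows that the true obstruction to saving the extra two sticks (beyond the two that are always available, giving $3a(K)-2$) is the whole class of star-shaped presentations, not just the trefoil. It handles these by passing to the dual arc presentation, which fails to be star-shaped unless the page numbers occur in cyclic order, in which case $K$ is forced to be the $(n+1,n)$-torus knot; for those knots with $n\geq 3$ it uses the separate count $a(K)=2n+1\leq n^2-2=c(K)-1$, which already yields $s_L(K)\leq 3a(K)-2\leq 3c(K)-5$, while $n=2$ gives the trefoil, the genuine exception. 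Your proposal would need an analogous case analysis even if the embedding problem above were repaired; as written, both the realizability of the construction and the four-merger count are unsupported assertions.
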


\begin{figure}[h]
\begin{center}
\includegraphics[scale=1]{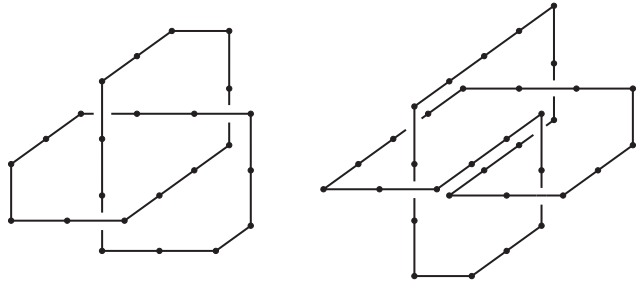}
\end{center}
\vspace{-2mm}
\caption{$3_1$ and $4_1$ in the cubic lattice}
\label{fig:1}
\end{figure}

\section{Arc index and dual arc presentation}

There is an open-book decomposition of $\mathbb{R}^3$ which has open
half-planes as pages and the standard $z$-axis as the binding axis.
We may regard each page as a half-plane $H_{\theta}$ at angle
$\theta$ when the
$x$-$y$ plane has a polar coordinate.
It can be easily shown that every knot $K$ can be embedded in an open-book
with finitely many pages so that it meets each page in a simple arc.
Such an embedding is called an {\em arc presentation\/} of $K$.
And the {\em arc index\/} $a(K)$ is defined to be the minimal number of
pages among all possible arc presentations of $K$.
For example, the leftmost figure in Figure \ref{fig2} shows
an arc presentation of figure-$8$ knot which has the arc index $6$.
Here the points of $K$ on the binding axis are called {\em binding indices\/},
assigned by $1,2, \cdots , a(K)$ from bottom to top.
Also we assign the page numbers $1,2, \cdots , a(K)$ to all the arcs from the back page to the front.

Now we introduce a dual arc presentation.
To distinguish from binding indices we put the related page number with a circle on each arc.
Take a proper line $l$ which does not lie on any plane containing the binding axis.
As in the second figure we isotope each arc to the union of two line segments meeting at the line $l$.
Then all page numbers are appeared on $l$ in the order of increasing or decreasing.
Now rotate it $180^{\circ}$ around the binding axis so that binding indices and
page numbers are exchanged, followed by reversing the isotopy as before.
The result is called a {\em dual arc presentation\/}.
Obviously both presentations represent the same knot.
We mention that in a dual arc presentation the page number $k$ and the two binding indices
$b_1$ and $b_2$ at the end points of the arc with the page number $k$
came from the binding index $k$ and the adjoining two arcs
with the page numbers $b_1$ and $b_2$ at the binding point $b_k$,
respectively, of the original arc presentation.
Refer the bold arc segments in the figure.
This fact is crucial in the last part of the proof of the main theorem.

\begin{figure} [h]
\begin{center}
\includegraphics[scale=1]{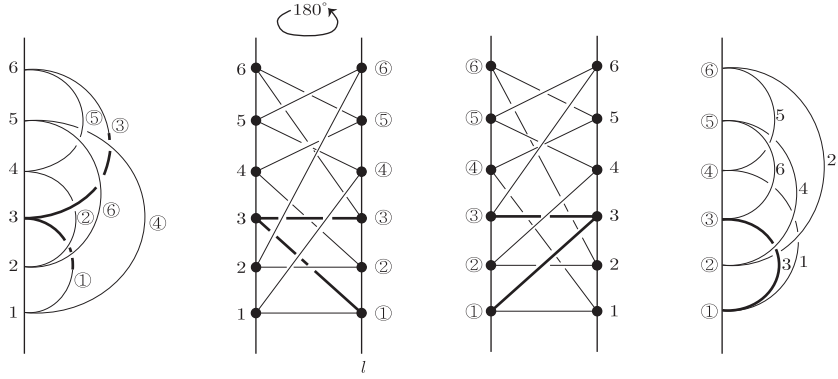}
\end{center}
\vspace{-2mm}
\caption{An arc presentation and its dual arc presentation}
\label{fig2}
\end{figure}

Bae and Park established an upper bound on arc index in terms of
crossing number. Corollary $4$ and Theorem $9$ in \cite{BP} provide the following;

\begin{theorem} {\textup{(Bae-Park)}} \label{thm:BP}
Let $K$ be any nontrivial knot. Then $a(K) \leq c(K)+2$. Moreover
if $K$ is a non-alternating prime knot, then $a(K) \leq c(K)+1$.
\end{theorem}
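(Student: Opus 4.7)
My plan is to construct an arc presentation of $K$ directly from a minimum-crossing diagram. I would begin with a diagram $D$ of $K$ realizing $c = c(K)$ crossings and isotope it into a rectangular (grid) projection, in which the knot is a union of horizontal and vertical segments meeting only at corners and at the $c$ crossings, with horizontal segments placed at distinct heights and vertical segments at distinct widths. A grid diagram of this kind converts naturally into an arc presentation: the horizontal segments become the arcs of the open-book pages, while the endpoints of the vertical segments, suitably stacked, provide the ordering of binding indices along the binding axis. Establishing Theorem \ref{thm:BP} then reduces to bounding the number of horizontal segments that a grid presentation of $K$ needs.

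A naive grid realization of a $c$-crossing diagram uses on the order of $2c$ horizontal segments, well above $c+2$. To drive the count down, I would apply a sequence of local simplifications -- Cromwell-type moves on grid diagrams, which include commutations and stabilizations/destabilizations of adjacent rows -- to eliminate redundant horizontal segments without altering the knot type. For the stronger bound in the non-alternating prime case, I would exploit the presence of two consecutive over-crossings (or under-crossings) along the knot, a feature that any non-alternating diagram must exhibit: locally near such a pair, two adjacent horizontal segments can be merged into one, saving a single arc in the arc presentation while preserving the knot. Primality of $K$ ensures no global obstruction interferes with this local modification.

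The hardest step is the sharpness of the count. Establishing that the simplifying moves can always be pushed all the way down to exactly $c+2$ rows -- rather than becoming stuck at some intermediate, larger configuration -- is the combinatorial core of Theorem \ref{thm:BP}. This would most plausibly be carried out by induction on $c(K)$, or via a structural lemma asserting that any grid presentation exceeding the $c+2$ threshold admits a destabilization or a commutation that reduces the row count. Verifying that such a move is always available, that it respects the knot type, and that it interacts correctly with the alternating/non-alternating dichotomy (so that the improved bound in the non-alternating case does not accidentally fail) is where the bulk of the technical work would go.
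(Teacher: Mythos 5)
This statement is quoted by the paper from Bae--Park \cite{BP} (their Corollary 4 and Theorem 9) without proof, so the only question is whether your sketch would actually establish it. It would not: the entire combinatorial content is deferred to a step you do not carry out. Your plan is to put a minimal diagram into grid position (giving on the order of $2c$ rows) and then reduce to $c+2$ rows via commutations and destabilizations, justified by a ``structural lemma asserting that any grid presentation exceeding the $c+2$ threshold admits a destabilization or a commutation that reduces the row count.'' No such lemma is proved, and as stated it is almost certainly false: monotonic simplification of grid diagrams by these moves down to the minimal arc number is not available for general knots (Dynnikov's monotonic-simplification theorem covers the unknot and split/composite links, not arbitrary knots), and even if it were, it would only deliver $a(K)$ itself, not the inequality $a(K)\le c(K)+2$ --- you would still need a separate argument tying the terminal row count to the crossing number. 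In effect the ``hardest step'' you flag \emph{is} the theorem, and the proposal assumes it.

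The actual Bae--Park argument runs in the opposite direction: rather than starting large and simplifying, they construct an arc presentation directly from a reduced $c$-crossing diagram, producing essentially one arc per crossing plus two extra arcs by an inductive construction on the underlying $4$-valent plane graph of the diagram; the non-alternating prime case saves one further arc by locating a suitable configuration in such a diagram. Your heuristic for that case (merging two adjacent horizontal segments near a pair of consecutive over-crossings) gestures at the right phenomenon but gives no argument that the merge preserves the knot type, that primality really removes all obstructions, or that it is compatible with having already reached $c+2$ arcs. If you want a complete proof, follow the constructive route: build the presentation with the correct count from the outset rather than hoping local moves will find the minimum.
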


Later Jin and Park improved the second part of the above theorem
as Theorem $3.3$ in \cite{JiP}.

\begin{theorem} {\textup{(Jin-Park)}} \label{thm:JP}
Let $K$ be any non-alternating prime knot. Then $a(K) \leq c(K)$.
\end{theorem}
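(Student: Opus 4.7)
The plan is to upgrade the Bae--Park bound $a(K)\le c(K)+1$ from Theorem~\ref{thm:BP} by showing that the non-alternating hypothesis always permits one further page to be eliminated. I would start from a reduced minimal diagram $D$ of $K$ realising $c(K)$ crossings and run the Bae--Park construction on $D$ to obtain an arc presentation (equivalently, a grid diagram) $\alpha$ with $c(K)+1$ pages. The strategy is then to locate a structural feature of $\alpha$, forced by the failure of alternation in $D$, that permits a destabilisation of the grid.

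The first substantive step is to pin down a \emph{non-alternating pair} of crossings in $D$: because $D$ is prime, reduced and non-alternating, there must exist two crossings occurring consecutively along the knot as two overcrossings in a row (or, symmetrically, two undercrossings in a row). Tracing how the Bae--Park procedure builds $\alpha$ out of $D$, such a pair forces two successive arcs of $\alpha$ to be supported on the same page or on adjacent pages, with no other arc interleaving between them along the binding axis. This local coincidence is precisely what the general alternating case lacks, and it is what makes the improved bound possible.

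Next I would perform a sequence of Cromwell moves on $\alpha$ --- cyclic permutations of pages and of binding indices, together with one or more commutation (exchange) moves --- to isolate the non-alternating pair so that they become an adjacent $X,O$ pair sharing a row and a column in the associated grid diagram. A single destabilisation at that corner then removes one row and one column, producing an arc presentation of $K$ with $c(K)$ pages, which gives $a(K)\le c(K)$.

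The main obstacle will be the second step: establishing rigorously that a non-alternating pair in $D$ really does produce a destabilisable corner in $\alpha$, irrespective of the ambient combinatorial complexity of the diagram. One must track carefully which binding indices are assigned to which crossings during the Bae--Park construction and verify that no other arc can block the planned commutation moves. This bookkeeping is essentially the technical heart of the Jin--Park argument, and it is the point at which the non-alternating and prime hypotheses are used in tandem rather than separately.
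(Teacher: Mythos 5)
First, note that the paper does not prove this statement at all: it is imported verbatim as Theorem~3.3 of Jin and Park \cite{JiP} and used as a black box alongside Theorem~\ref{thm:BP}, so there is no internal proof to compare against. Judged on its own terms, your proposal is an outline rather than a proof, and the step you defer is exactly the one carrying all the content. You correctly observe that a minimal diagram $D$ of a non-alternating knot must contain a non-alternating edge (two consecutive overpasses, or two consecutive underpasses, along the knot), but the assertion that this forces the Bae--Park arc presentation $\alpha$ with $c(K)+1$ pages to acquire, after cyclic and exchange moves, an adjacent destabilisable corner is never argued --- you flag it yourself as ``the technical heart.'' Nothing in the proposal rules out that the two relevant arcs are separated on the binding axis by other strands in a way that no sequence of commutation moves can undo, and it is also left unexplained where primeness is actually used.

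Second, the overall strategy of post-processing by destabilisation is itself suspect. The truth of $a(K)\le c(K)$ does not imply that the particular $(c(K)+1)$-page presentation produced by the Bae--Park construction admits a destabilisation: an arc presentation can be locally irreducible (no destabilisation available after any sequence of exchange and cyclic moves) without realising the arc index, since monotone simplification is not available for general knots. The published Jin--Park argument does not destabilise the Bae--Park output; it refines the construction of \cite{BP} itself, exploiting the fact that a reduced prime non-alternating diagram carries at least two non-alternating edges (a cyclic over/under word with equal numbers of overpasses and underpasses that contains an over--over block must also contain an under--under block), each of which saves an arc at the construction stage. To make your route work you would have to supply the combinatorial argument that actually produces the destabilisable corner, and that argument is essentially as hard as the theorem you are trying to prove.
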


\section{Lattice arc presentation and circular arc presentation}

A stick in the cubic lattice which is parallel to the $x$-axis is called an {\em $x$-stick\/}
and similarly define a $y$-stick and a $z$-stick.
We move the binding axis to the line $y=x$ on the $x$-$y$ plane and
replace each arc by two connected sticks which are an $x$-stick and a $y$-stick
properly below the line $y=x$.
For better view we slightly perturb some sticks if they are overlapped.
The resulting is called a {\em lattice arc presentation\/} of the knot on the plane.
See the leftmost figure in Figure \ref{fig3}.

This lattice arc presentations of knots are very useful to construct stick knots
in the cubic lattice.
Let $K$ be a nontrivial knot whose arc index is $a(K)$.
We start with a lattice arc presentation of $K$ with $2 a(K)$ sticks on the $x$-$y$ plane.
Now $b_i$, called a {\em binding point\/}, denotes the point $(i,i)$ on the binding axis
for each binding index $i$.

To construct a lattice stick presentation of $K$ on $\mathbb{Z}^3$, we introduce several notations.
For convenience the notations concerning the $y$ and $z$-coordinates
will be defined in the same manner as the $x$-coordinate.
We denote an $x$-stick
$(x_{ii'},j,k) = \{(x,y,z) \in \mathbb{Z}^3 \, | \, i\leq x \leq i', \, y=j, \, z=k \}$
for some integers $i,i',j$ and $k$.
The plane with the equation $x=n$ for some integer $n$ is called an $x$-{\em level\/} $n$.

First we prove a simple version.

\begin{theorem} \label{thm:-2}
$s_L(K) \leq 3 a(K) - 2$ for a nontrivial knot $K$.
\end{theorem}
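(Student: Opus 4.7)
The plan is to realize $K$ in the cubic lattice by lifting a planar lattice arc presentation of its dual arc presentation into $3$-space at staggered heights, and then trimming two sticks using the duality observation at the end of the previous section.

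First I take a dual arc presentation of $K$ with $a = a(K)$ pages and place it as a lattice arc presentation in the plane $z=0$: put the binding axis on the line $y=x$, so the binding points are $b_i=(i,i,0)$, and draw each arc below the diagonal as an $L$-shape made of one $x$-stick and one $y$-stick meeting at a corner. This costs $2a$ sticks. Next I assign the arc with page number $k$ to $z$-level $k$ and translate its two horizontal sticks to height $z=k$; at each binding point $b_i$ the two arcs meeting there now sit at distinct heights $p$ and $q$, and a single vertical $z$-stick over $b_i$, running from $(i,i,p)$ to $(i,i,q)$, reconnects them. The result is a lattice stick knot isotopic to $K$ using $2a$ horizontal sticks and $a$ vertical sticks, i.e.\ $3a$ sticks in total.

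To shave off two sticks, I exploit the duality highlighted at the end of Section~2: the two endpoints of the arc of page $k$ in the dual come from the pages of the two arcs that meet at $b_k$ in the original. Applying this at the extremal pages $k=1$ and $k=a$, one can arrange the $L$-shapes of two pairs of adjoining arcs so that at two specific binding points each pair lies on a common $z$-level and the two $L$-shapes are disjoint in projection except at the shared binding point. At those two binding points the connecting $z$-stick has length zero and can be deleted, bringing the total down to $3a-2$.

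The main obstacle is this last step: I must verify that the two extremal pairs of arcs can be laid out on shared heights without forcing unintended intersections or lattice collisions with the remaining $a-4$ arcs on their distinct heights, and without altering the knot type. The duality correspondence, which pins down exactly which arcs meet at the extremal binding points in terms of the extremal pages of the original presentation, is precisely the tool needed to pick the orientations of the $L$-shapes and make this arrangement work.
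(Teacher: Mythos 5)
Your construction of the initial $3a(K)$-stick presentation (each arc realized as an $L$-shape of one $x$-stick and one $y$-stick at $z$-level equal to its page number, the pairs joined by $z$-sticks over the binding points) is exactly the paper's first step, and it is fine; whether you start from the given arc presentation or from its dual is immaterial here. The gap is in the reduction from $3a(K)$ to $3a(K)-2$, and it is precisely the step you yourself flag as unverified. To make a connecting $z$-stick have length zero you must bring two adjoining arcs onto a common $z$-level, which means translating at least one arc from its page level $p$ to some other level $q$; in doing so that arc sweeps through every intermediate level, and wherever its projection crosses an arc living at an intermediate level the over/under information flips, so the knot type can change. The duality correspondence from Section 2 does not control this: it only tells you \emph{which} pages meet at the extremal binding points, not that the corresponding $L$-shapes avoid all arcs at the intervening levels. (The paper does use a level-shifting move of this kind, but only in Lemma \ref{lem:nonstar}, and only after first isotoping the arc to be moved into the region above the line $y=x$, where it is guaranteed to cross nothing; no such preparation is available in general.)

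The paper's actual reduction is different and entirely local, and never changes the level of any arc. At the binding point $b_1$ the two attached arcs both have their $x$-sticks on $y$-level $1$, namely $(x_{1i},1,k)$ and $(x_{1i'},1,k')$ with, say, $i<i'$ and $k<k'$, joined by the $z$-stick $(1,1,z_{kk'})$. These three sticks are replaced by the two sticks $(i,1,z_{kk'})$ and $(x_{ii'},1,k')$; the isotopy between the old and new routes sweeps out a rectangle in the plane $y=1$, which meets no other part of the knot because every stick not attached to $b_1$ has all its $y$-coordinates at least $2$. The symmetric move at $b_{a(K)}$, applied to the two $y$-sticks on $x$-level $a(K)$, removes the second stick, giving $3a(K)-2$. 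To salvage your version you would need to prove that the two extremal pairs can be co-leveled without disturbing any crossing, and that claim is false in general as stated; the correct exploitation of the extremal binding points is the overlap of the two $x$-sticks on $y$-level $1$ (resp.\ $y$-sticks on $x$-level $a(K)$), not a collapse of $z$-levels.
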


\begin{proof}
Starting with a lattice arc presentation of $K$ with $a(K)$ arcs,
we will construct a lattice stick presentation of $K$.
Pick an arc with the page number $k$ and
the binding indices $i$ and $j$ at its endpoints for some $i,j,k = 1,2, \cdots, a(K)$.
Assume that $i<j$.
First we locate two sticks $(x_{ij},i,k)$ and $(j,y_{ij},k)$ on $\mathbb{Z}^3$.
Build all the other $a(K)-1$ arcs in the same way.
Since for every binding point, say $b_i$, there are exactly two arcs attached to,
we can connect this pair of arcs by a $z$-stick $(i,i,z_{kk'})$
where $k$ and $k'$ are the page numbers of the pair.
By connecting all such pair by these $z$-sticks,
we finally get a lattice stick presentation of $K$ with $3 a(K)$ sticks
as the middle figure in Figure \ref{fig3}.

\begin{figure} [h]
\begin{center}
\includegraphics[scale=1]{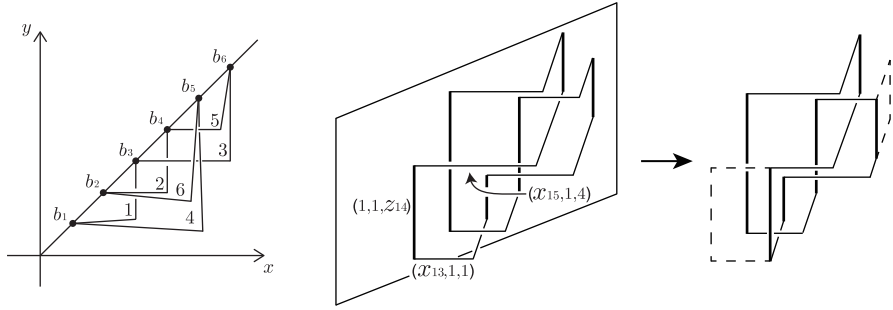}
\end{center}
\vspace{-2mm}
\caption{$s_L(K) \leq 3 a(K) - 2$}
\label{fig3}
\end{figure}

Now we will reduce two sticks from this presentation easily.
Consider the two $x$-sticks on $y$-level $1$ and the $z$-stick connecting them.
These two $x$-sticks are $(x_{1i},1,k)$ and $(x_{1i'},1,k')$.
Without loss of generality we may assume that $i < i'$ and $k < k'$,
so this $z$-stick is $(1,1,z_{kk'})$.
Delete the shorter $x$-stick $(x_{1i},1,k)$, and replace the other two sticks
by an $x$-stick $(x_{ii'},1,k')$ and a $z$-stick $(i,1,z_{kk'})$ to represent $K$ again.
Similarly we repeat this replacement for the two $y$-sticks on $x$-level $a(K)$
and the $z$-stick connecting them.
Then as the last figure we get a lattice stick presentation of $K$ with $3 a(K) - 2$.
\end{proof}

We introduce another expression of an arc presentation of $K$.
First take a $2$-dimensional circular disk
whose boundary actually indicates the binding axis combined with the infinity, say $b_{\infty}$.
Put its binding indices running clockwise on the boundary of this disk starting right after $b_{\infty}$.
Now draw $a(K)$ straight chords for all arcs whose endpoints lie on the boundary.
During this procedure all the page numbers and the related under-over crossings are unchanged.
This expression is called a {\em circular arc presentation\/} of $K$
and the new binding axis is called a {\em circular binding axis\/}.
See the left figure in Figure \ref{fig4} for an example of figure-$8$ knot.

\begin{figure} [h]
\begin{center}
\includegraphics[scale=1]{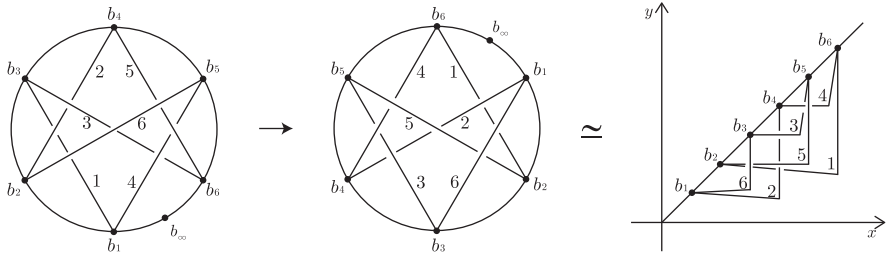}
\end{center}
\vspace{-2mm}
\caption{Circular arc presentation and how rotations work}
\label{fig4}
\end{figure}

We remark the following two facts about arc presentations of knots.
Like turning pages of a book we can rotate the page numbers of an arc presentation,
so that if we turn $m$ number of pages for some integer $m$
then each page number $k$ goes to $k+m$ (mod* $a(K)$).
Here $x$ (mod* $y$) $=$ $x$ (mod $y$) with one exception
so that we use $y$ instead of $0$.
This changing is called {\em page number rotating\/}.
Similarly we can also rotate binding points along the circular binding axis,
so that for some integer $n$ each binding index $i$ goes to $i+n$ (mod* $a(K)$).
This rotation is easily understood when we relocate $b_{\infty}$ on a circular arc presentation of $K$.
This is call {\em binding index rotating\/}.
The middle and right figures in Figure \ref{fig4} explain how these two rotations work.
Clearly both rotations do not change the knot $K$.

An arc presentation of $K$ is called {\em star shaped\/}
if it has an odd number of arc index $a(K)$ and
the difference between the two binding indices of the end points of every arc is always
either $n$ or $n+1$ where $n = \frac{a(K)-1}{2}$.
Let $c_i$, $i=1, \cdots, 2n+1$, be the arc whose binding points are $b_i$ and $b_j$
where $j = i+n$ (mod* $a(K)$).
In Figure \ref{fig5} we give an example of a star shaped arc presentation of a knot.

\begin{figure} [h]
\begin{center}
\includegraphics[scale=1]{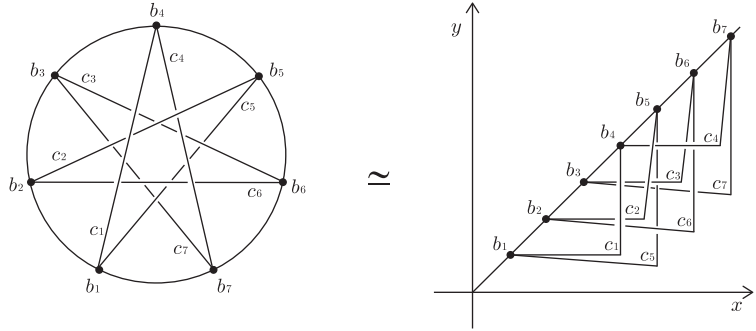}
\end{center}
\vspace{-2mm}
\caption{Star shaped arc presentation}
\label{fig5}
\end{figure}

\section{Proof of Theorem \ref{thm:main}}

From now on $K$ is a nontrivial knot which is not trefoil knot.
For convenience we just use $i$ instead of $i$ (mod* $a(K)$) for a binding index or a page number $i$.
We distinguish into two cases according to the shapes of its arc presentations.

\begin{lemma} \label{lem:nonstar}
If $K$ has a non-star shaped arc presentation with $a(K)$ arcs,
then $s_L(K) \leq 3 a(K) - 4$.
\end{lemma}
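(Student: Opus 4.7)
Plan: We improve the bound of Theorem~\ref{thm:-2} from $3a(K)-2$ to $3a(K)-4$ by exhibiting two additional mergers, each replacing three sticks by two, in the lattice stick construction; these are enabled by the non-star shaped hypothesis.

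First, using binding-index rotating and, if necessary, a reflection reversing the binding axis direction, we may assume the arc presentation contains a \emph{short arc} $A$ connecting $b_1$ to $b_m$ with $m \leq n$. Such an arc must exist because a star shaped presentation has every arc of span $n$ or $n+1$; the non-star hypothesis guarantees some arc violates this, and by reflection we may take it to be short rather than long.

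Next, we exploit a flexibility not used in Theorem~\ref{thm:-2}: each arc from $b_i$ to $b_j$ in the 3D lattice may be drawn in two distinct L-shapes---Option~A (an $x$-stick at $y=i$ joined to a $y$-stick at $x=j$) or Option~B (a $y$-stick at $x=i$ joined to an $x$-stick at $y=j$)---since each arc lies on its own $z$-level and so no crossing conflict arises from this choice. Assigning Option~B to the short arc $A$, we see that at its far endpoint $b_m$, $A$'s $x$-stick at $y=m$ comes in from $x=1$, while the other arc $A'$ incident to $b_m$ (which connects $b_m$ to some $b_q$ with $q>m$, built in Option~A) contributes an $x$-stick at $y=m$ going out to $x=q$. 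Together with the $z$-stick at $b_m$, these three sticks can be re-routed into a single long $x$-stick from $(1,m,\cdot)$ to $(q,m,\cdot)$ and a new $z$-stick at $(1,m,\cdot)$, saving one stick. To preserve the standard merger at $b_1$, we also re-assign the second arc at $b_1$ to Option~B, so that the $b_1$ merger proceeds in the $y$-direction (two parallel $y$-sticks at $x=1$) instead of the $x$-direction; the merger at $b_{a(K)}$ from Theorem~\ref{thm:-2} continues to save one stick as before. A fourth merger is obtained by a symmetric construction at the opposite end of the binding axis---either from a second short arc present in the original presentation, or by passing to the dual arc presentation of $K$, which has arc index $a(K)$ and contains its own shortness structure by the correspondence described in Section~2 between binding indices of a dual arc and page numbers at a binding point of the original.

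The principal obstacle is the collision check. The two new $z$-sticks---located at $(1,m,\cdot)$ and at its symmetric analogue---pass through several intermediate $z$-levels, and these must not intersect any other sticks of the presentation. The non-star hypothesis is precisely what makes this feasible: by forcing $m$ to be small, the new $z$-stick lies in a narrow corridor near the binding axis, so that only a few arcs---namely the others terminating at $b_1$, $b_m$, or the analogous symmetric points---can possibly obstruct. Since the Option~A/B assignments of these potentially obstructing arcs are under our control (and, if needed, can be further adjusted using page-number rotating to move problematic page labels out of the way), a case-by-case analysis confirms that the four mergers coexist without self-intersection, yielding $s_L(K)\leq 3a(K)-4$.
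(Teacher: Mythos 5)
Your overall strategy---use the non-star hypothesis to locate an arc that can be redrawn so as to enable extra corner-cutting mergers---is in the right spirit, but the argument as written has two genuine gaps, and it also differs mechanically from the paper's proof. The paper does not chain together four L-corner mergers. It picks the violating \emph{pair} of arcs sharing a binding point, rotates so that the three relevant binding indices become $\alpha,\beta,a(K)$ with $1<\alpha<\beta<a(K)$ and so that the arc $l$ joining $b_\alpha$ to $b_\beta$ has page number $1$; it then flips $l$ to the empty side of the line $y=x$ (legitimate because page $1$ is extremal), runs the construction of Theorem~\ref{thm:-2} to get $3a(K)-2$ sticks (the condition $1<\alpha<\beta<a(K)$ keeps $l$ away from the two mergers already performed there), and finally lifts $l$ vertically to the $z$-level $k$ of the other arc at $b_\beta$. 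At that level the $x$-stick of $l$ is colinear with the $x$-stick of that other arc, so the $z$-stick at $b_\beta$ disappears and two $x$-sticks become one: a saving of $2$ in a single move, with a trivial collision check because the half-space above the diagonal contains nothing else.

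The two gaps in your version are these. First, your fourth merger is not actually constructed. The non-star hypothesis guarantees only \emph{one} arc of anomalous span, and you have already spent it on the third merger; a ``second short arc'' need not exist, and the fallback of ``passing to the dual arc presentation'' is incoherent here, since you cannot perform part of a single lattice construction in one presentation and part in its dual. As written the argument reliably reaches only $3a(K)-3$. Second, the collision and compatibility analysis, which you yourself identify as the principal obstacle, is entirely deferred. It is not a routine check: your mergers $1$ and $3$ both consume sticks of the same arc $A$ (its $y$-stick at $x=1$ and its $x$-stick at $y=m$, respectively), so whether both can be performed simultaneously, and what the resulting stick count is, depends on the relative order of the three page numbers involved and on which of the two sticks at $b_1$ is the shorter one; moreover the new $z$-stick at $(1,m,\cdot)$ can meet the Option~B $y$-stick of the other arc at $b_1$ whenever that arc reaches past $y=m$ at an intermediate page number. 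Appealing to page-number rotating cannot fix this, since rotating permutes all page numbers cyclically at once and cannot independently reposition a single problematic label. Until these cases are worked out, the claimed bound $3a(K)-4$ is not established.
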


\begin{proof}
Note that in a star shaped arc presentation every pair of arcs sharing
one binding index has the property that the difference between the two binding indices
at the other ends of these two arcs is either $1$ or $a(K)-1$.
Thus if $K$ has a non-star shaped arc presentation,
there is a pair of arcs sharing one binding index, say $\beta'$, such that the difference
between the two binding indices, say $\alpha'$ and $\gamma'$,
at the other ends of these two arcs is neither $1$ nor $a(K)-1$.
Without loss of generality $\alpha'$, $\beta'$ and $\gamma'$ appear
in order to clockwise along the circular binding axis.
After a proper binding index rotating these three indices go to $\alpha$, $\beta$ and $a(K)$,
respectively, so that $1 < \alpha < \beta < a(K)$.
Indeed $\alpha = \alpha' + a(K) - \gamma'$ and $\beta = \beta' + a(K) - \gamma'$.
Also we do a page number rotating
so that the arc having binding indices $\alpha$ and $\beta$ has the page number $1$.
Let $k$ be the page number of the other arc having binding indices $\beta$ and $a(K)$.
See the figures in Figure \ref{fig6} for better understanding.

\begin{figure} [h]
\begin{center}
\includegraphics[scale=1]{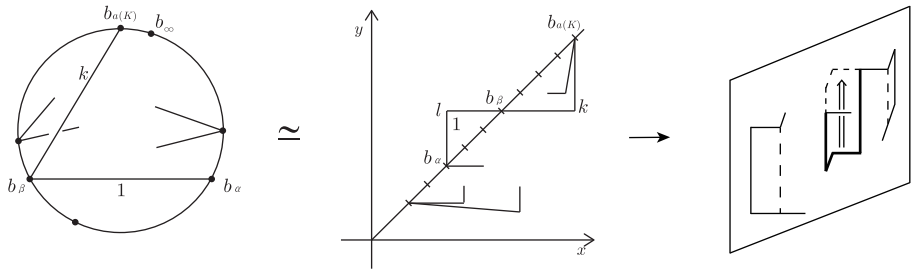}
\end{center}
\vspace{-2mm}
\caption{Lattice stick presentation from a non-star shaped arc presentation}
\label{fig6}
\end{figure}

We slightly change the related lattice arc presentation
so that only the arc, say $l$, with the page number $1$ lies above the line $y=x$.
This does not change the knot $K$ because the arc $l$ does not cross the other part of $K$
when rotating it $180^{\circ}$ around the binding axis.
Now repeat the whole part of the proof of Theorem \ref{thm:-2}.
Remark that both end points of the arc $l$ are not the binding points $b_1$ and $b_{a(K)}$
since $1 < \alpha < \beta < a(K)$.
Then we get a lattice stick presentation of $K$ with $3a(K)-2$ sticks.

Still we have a free space to move $l$ up or down through the $z$-axis without crossing any other arcs.
Lift it up till it reaches the $z$-level $k$,
and replace the adjoined two $z$-sticks at the end points of $l$ by proper $z$-sticks.
Indeed the new $x$-stick of $l$ lies on the same line containing the $x$-stick
of the other arc with the page number $k$.
This means that we can reduce the number of sticks by $2$.
\end{proof}

\begin{lemma} \label{lem:star}
If $K$ has a star shaped arc presentation with $a(K)$ arcs,
then either $K$ is $(n+1,n)$-torus knot or $s_L(K) \leq 3 a(K) - 4$.
\end{lemma}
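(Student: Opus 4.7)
The plan is to pass to the dual arc presentation and exploit the index/page correspondence recorded at the end of Section~2. Let $a(K)=2n+1$ and, for $i=1,\dots,2n+1$, let $p(i)$ denote the page number of the arc $c_i$ of the given star shaped presentation (so $c_i$ has binding indices $b_i$ and $b_{i+n}$). By the dualization recipe, the arc at page~$i$ of the dual has its two binding indices equal to the page numbers of the two arcs of the original meeting at $b_i$, namely $c_i$ and $c_{i-n}$, so in the dual the arc at page~$i$ has binding indices $p(i)$ and $p(i-n)$. I then split on whether this dual presentation is itself star shaped. If it is \emph{not} star shaped, then the dual is an arc presentation of $K$ with $a(K)$ arcs satisfying the hypothesis of Lemma~\ref{lem:nonstar}, and applying that lemma to the dual yields $s_L(K)\le 3a(K)-4$.

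Suppose instead that the dual \emph{is} also star shaped. Then for every $i$ the ordinary difference $|p(i)-p(i-n)|$ is $n$ or $n+1$, equivalently $p(i)-p(i-n)\equiv\pm n\pmod{2n+1}$. The $2n+1$ signed differences telescope to $0\pmod{2n+1}$; if $k$ of them equal $+n$ and the remaining $2n+1-k$ equal $-n$, then $n(2k-2n-1)\equiv 0\pmod{2n+1}$, and since $\gcd(n,2n+1)=1$ with $2n+1$ odd this forces $k=0$ or $k=2n+1$. Thus one of the relations $p(i)\equiv p(i-n)\pm n$ holds uniformly in $i$. Iterating, and using $\gcd(n,2n+1)=1$ once more to see that the orbit of $i\mapsto i-n$ is all of $\mathbb{Z}/(2n+1)$, determines $p$ up to an additive constant: $p(i)\equiv i+c$ or $p(i)\equiv -i+c$ for some constant $c$. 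After a page number rotation I may take $p$ to be the identity or the reversal on $\mathbb{Z}/(2n+1)$; both of the resulting arc presentations are the standard star shaped arc presentations of the $(n+1,n)$-torus knot, the second being its mirror image.

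The main obstacle will be this last identification---recognizing that the two normal forms of $p$ both produce the $(n+1,n)$-torus knot. Once $p$ is known to be one of these two permutations, the over/under data of a star shaped arc presentation are determined by the cyclic order of the page numbers around the star, and the natural-order placement is the standard arc presentation of the $(n+1,n)$-torus knot; the reversal case is handled by the analogous identification with its mirror.
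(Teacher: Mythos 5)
Your proposal is correct and follows essentially the same route as the paper: pass to the dual arc presentation, apply Lemma~\ref{lem:nonstar} when the dual fails to be star shaped, and identify the remaining case (page numbers cyclically in order or in reverse order) with the $(n+1,n)$-torus knot. Your telescoping/gcd argument rigorously establishes the dichotomy that the paper only asserts---namely that if every pair of arcs adjoining at a binding point has page-number difference $n$ or $n+1$ then the page function is $p(i)\equiv\pm i+c$---so your write-up is, if anything, more complete on that step.
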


\begin{proof}
Recall that $a(K)$ is odd and $n = \frac{a(K)-1}{2}$.
In this presentation two arcs $c_i$ and $c_{i+n}$ adjoin at the binding point $b_{i+n}$
for $i=1, \cdots, a(K)$.

First we consider the case that the page numbers of the arcs $c_1, \cdots, c_{2n+1}$
are in the order or reverse-order of their indices $1, \cdots, 2n+1$.
Without loss of generality we assume that they are in the order.
We may assume that $c_1$ has a page number $1$ after a proper page number rotating.
For only $i=1, \dots, n$ we take the pair of arcs sharing each binding point $b_i$ and
bend this pair inside the circular disk as in Figure \ref{fig7}.
Then one can figure out that this knot is $(n+1,n)$-torus knot.

\begin{figure} [h]
\begin{center}
\includegraphics[scale=1]{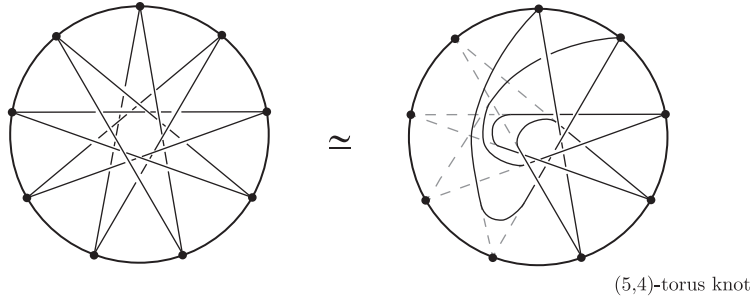}
\end{center}
\vspace{-2mm}
\caption{$(n+1,n)$-torus knot}
\label{fig7}
\end{figure}

If the page numbers are neither in the order nor reverse-order of their indices,
then one would find some binding point $b_i$ such that the difference between the page numbers
of the pair of arcs $c_i$ and $c_{i+n+1}$ adjoining at $b_i$ is neither $n$ nor $n+1$.
Thus in a dual arc presentation there exists an arc such that the difference between
the two binding indices at the end points of this arc is neither $n$ nor $n+1$.
This implies that the dual arc presentation of $K$ is not star-shaped,
so $s_L(K) \leq 3 a(K) - 4$ by Lemma \ref{lem:nonstar}
\end{proof}

\begin{lemma} \label{lem:torus}
If $K$ is $(n+1,n)$-torus knot except trefoil (so $n \geq 3$),
then $s_L(K) \leq 3 c(K) - 5$.
\end{lemma}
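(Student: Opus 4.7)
The plan is to avoid any new construction and instead derive the bound by combining Theorem~\ref{thm:-2} with the classical crossing-number formula for torus knots. The star-shaped arc presentation exhibited in the proof of Lemma~\ref{lem:star} (see Figure~\ref{fig7}) already realizes the $(n+1,n)$-torus knot on $2n+1$ arcs, so in particular $a(K)\leq 2n+1$. Feeding this into Theorem~\ref{thm:-2} produces a lattice stick presentation with $3(2n+1)-2 = 6n+1$ sticks, and no further geometric work is needed.

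It then remains to check that $6n+1\leq 3c(K)-5$. By Murasugi's theorem the crossing number of the $(p,q)$-torus knot is $\min(p(q-1),\, q(p-1))$, which for $T_{n+1,n}$ equals $\min((n+1)(n-1),\, n^2) = n^2-1$. Thus the target becomes $3c(K)-5 = 3n^2-8$, and the desired inequality rearranges to $3n^2-6n-9 \geq 0$, i.e. $(n-3)(n+1)\geq 0$, which holds for every $n\geq 3$.

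The main subtle point, and the only real obstacle, is that the bound is tight at $n=3$: for the $(4,3)$-torus knot both $6n+1$ and $3c(K)-5$ equal $19$, so there is no slack at all, and the conclusion depends critically on the quadratic growth of $c(K)=n^2-1$ overtaking the linear growth of $6n+1$ from $n=3$ onward; for $n\geq 4$ the inequality becomes increasingly generous. This tightness also explains why the trefoil ($n=2$) must be excluded, since in that case $3c(K)-5 = 4$, far below the minimal lattice stick count of any nontrivial knot, so the method (and indeed the statement of Theorem~\ref{thm:main}) necessarily breaks there.
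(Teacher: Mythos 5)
Your proposal is correct and follows essentially the same route as the paper: apply Theorem~\ref{thm:-2} together with $a(K)\leq 2n+1$ and $c(K)=n^2-1$, and reduce the claim to $(n-3)(n+1)\geq 0$ (the paper phrases this as $a(K)\leq c(K)-1$ for $n\geq 3$). Your observations about tightness at $n=3$ and the failure for the trefoil are accurate but not needed for the argument.
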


\begin{proof}
Let $K$ be $(n+1,n)$-torus knot for $n \geq 3$.
Note that $c(K)=(n+1)(n-1)=n^2-1$ and $a(K)=(n+1)+n=2n+1$.
So $a(K) \leq c(K) -1$.
Then $s_L(K) \leq 3 c(K) - 5$ by Lemma \ref{thm:-2}.
\end{proof}

Now we are ready to prove the main theorem.

\begin{proof}[Proof of Theorem \ref{thm:main}]
Theorem \ref{thm:main} follows directly from Lemma \ref{lem:nonstar},
\ref{lem:star}, \ref{lem:torus} and combined with Theorem \ref{thm:BP} and \ref{thm:JP}.
\end{proof}

\end{document}